\tikzstyle{every node}=[circle, draw, inner sep=0pt, minimum width=4pt, fill=black, thick]
\newtheorem{theorem}{Theorem}
\newtheorem{claim}{Claim}
\title{Acyclic matchings in graphs of bounded maximum degree\thanks{Funded by the Deutsche Forschungsgemeinschaft (DFG, German Research Foundation) - 388217545.}}
\author{Julien Baste \and Maximilian F\"{u}rst  \and Dieter Rautenbach}
\date{}
\begin{document}
\onehalfspace
\maketitle
\begin{center}
{\small 
Institute of Optimization and Operations Research, Ulm University, Germany\\
\texttt{julien.baste,maximilian.fuerst,dieter.rautenbach@uni-ulm.de}\\[3mm]
}
\end{center}

\begin{abstract}
A matching $M$ in a graph $G$ is acyclic if the subgraph of $G$ 
induced by the set of vertices that are incident to an edge in $M$ 
is a forest.
We prove that every graph with $n$ vertices, maximum degree at most $\Delta$, and no isolated vertex,
has an acyclic matching of size at least 
$(1-o(1))\frac{6n}{\Delta^2},$
and we explain how to find such an acyclic matching in polynomial time. 
\end{abstract}
{\small 
\begin{tabular}{lp{13cm}}
{\bf Keywords:} Acyclic Matching
\end{tabular}
} \\
{\small 
\begin{tabular}{lp{13cm}}
{\bf AMS subject classification:} 05C70
\end{tabular}
\vfill$\mbox{}$
}

\pagebreak

\section{Introduction}
We consider simple, finite, and undirected graphs, and use standard terminology.
Let $M$ be a matching in a graph $G$, 
and let $H$ be the subgraph of $G$ 
induced by the set of vertices that are incident to an edge in $M$.
If $H$ is a forest, then $M$ is an \textit{acyclic} matching in $G$ \cite{gohehela},
and, 
if $H$ is $1$-regular, then $M$ is an \textit{induced} matching in $G$ \cite{stva}.
If $\nu(G)$, $\nu_{ac}(G)$, and $\nu_{s}(G)$
denote the largest size of a matching,
an acyclic matching,
and an induced matching in $G$,
respectively, 
then, since every induced matching is acyclic,
we have 
$$\nu(G)\geq \nu_{ac}(G) \geq \nu_s(G).$$
In contrast to the matching number $\nu(G)$, 
which is a well known classical tractable graph parameter,
both, 
the acyclic matching number $\nu_{ac}(G)$
as well as 
the induced matching number $\nu_{s}(G)$
are computationally hard \cite{gohehela,stva,dadelo,papr}.
While induced matchings have been studied in great detail,
see, in particular, \cite{jo,jo2,jorasa,hr} for lower bounds on $\nu_{s}(G)$
for graphs $G$ of bounded maximum degree as well as the references therein,
only few results are known on the acyclic matching number.
While the equality $\nu(G)=\nu_s(G)$ can be decided efficiently for a given graph $G$ \cite{cawa,koro},
it is NP-complete to decide whether $\nu(G)=\nu_{ac}(G)$
for a given bipartite graph $G$ of maximum degree at most $4$ \cite{fura2},
and efficient algorithms computing the acyclic matching number are known only
for certain graph classes \cite{papr,fu,fura2,bara}.
%chain graphs and bipartite permutation graphs \cite{papr},
%$P_5$-free graphs and $2P_3$-free graphs \cite{fu,fura2}, 
%and chordal graphs \cite{bara}.
It is known \cite{bara} that
$\nu_{ac}(G) \geq \frac{m}{\Delta^2}$ for a graph $G$ with $m$ edges and maximum degree $\Delta$,
which was improved \cite{fura} to $\frac{m}{6}$ for connected subcubic graphs $G$ of order at least $7$.
Since, for every $\Delta$-regular graph $G$ with $m$ edges, 
a simple edge counting argument implies $\nu_{ac}(G)\leq \frac{m-1}{2(\Delta-1)}$,
the constructive proofs of these bounds yield an efficient
$\frac{\Delta^2}{2(\Delta-1)}$-factor approximation algorithm 
for $\Delta$-regular graphs,
and 
an efficient $\frac{3}{2}$-factor approximation algorithm for cubic graphs
for the maximum acyclic matching problem.

In the present paper we show a lower bound on the acyclic matching number of a graph $G$ 
with $n$ vertices, maximum degree $\Delta$, and no isolated vertex,
which is inspired by a result of Joos \cite{jo} who proved
\begin{align} \label{felixBound}
\nu_s(G) \geq \frac{n}{\left(\lfloor \frac{\Delta}{2} \rfloor + 1\right)\left(\lceil \frac{\Delta}{2} \rceil + 1\right)}
\end{align}
provided that $\Delta\geq 1000$.
(\ref{felixBound}) is tight for the graph that arises 
by attaching $\lfloor \frac{\Delta}{2} \rfloor$ new vertices of degree $1$
to every vertex of a complete graph of order $\lceil \frac{\Delta}{2} \rceil+1$.
In view of these graphs, 
we conjectured \cite{fu,fura} that twice the right hand side of (\ref{felixBound}) 
should be the right lower bound on the acyclic matching number of the considered graphs
for sufficiently large $\Delta$,
that is, 
we believe that our following main result 
can be improved by a factor of roughly $\frac{4}{3}$.

\begin{theorem} \label{t1}
If $G$ is a graph with $n$ vertices, maximum degree at most $\Delta$, and no isolated vertex, then
$$\nu_{ac}(G) \geq \frac{6n}{\Delta^2+12\Delta^{\frac{3}{2}}}.$$
\end{theorem}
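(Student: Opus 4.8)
The plan is to construct a large acyclic matching by a randomized local procedure and to analyze its expected size. The key structural observation I would use is that if $S\subseteq V(G)$ is a set for which $G[S]$ is a forest, then every matching of $G[S]$ is automatically an acyclic matching of $G$, since $G[V(M)]$ is then a subgraph of the forest $G[S]$ and hence itself a forest. It therefore suffices to build a matching together with a witnessing induced forest simultaneously. A useful sanity check is the extremal family mentioned in the excerpt, the clique of order $\lceil\Delta/2\rceil+1$ with $\lfloor\Delta/2\rfloor$ pendant vertices attached to each clique vertex: here selecting two adjacent clique vertices together with one pendant each yields an induced path $P_4$ contributing two matching edges, and this is best possible. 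Thus the target constant $6$ reflects that one should pack \emph{pairs} of matching edges into short induced trees rather than isolated induced edges, the latter giving only the weaker induced-matching constant.

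To carry out the selection I would fix a uniformly random linear order $\pi$ of $V(G)$ and apply a rule that inspects only bounded neighborhoods. The simplest version accepts an edge $uv$ when $u$ and $v$ are the two $\pi$-smallest vertices of $N[u]\cup N[v]$; a short case analysis shows that the accepted edges form an induced matching, because any conflict (a shared endpoint, or an edge joining two accepted edges) forces some vertex to be simultaneously $\pi$-smaller and $\pi$-larger than another. This already gives a bound of the form $\frac{cn}{\Delta^2}$, but only with a small constant $c$. To reach the constant $6$ I would refine the rule so that it may additionally accept a pair of adjacent edges forming an induced $P_4$, and so that each side of a chosen edge competes against only about half of its neighborhood, balancing the two endpoints as in the extremal example. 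Verifying that the refined family is still acyclic is the delicate point: acyclicity is a global condition, so one must check that no collection of accepted short trees can close up into a cycle, again by deriving a contradiction to the $\pi$-minimality defining the selection.

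For the counting step I would express the expected number of accepted matching edges as a sum, over edges (or over candidate $P_4$'s), of the probability of the defining $\pi$-minimality event, each such probability being the reciprocal of a binomial coefficient in the relevant neighborhood size. Bounding these sizes and summing over at least $n/2$ edges, which is valid since $G$ has no isolated vertex and hence $m\ge n/2$, yields the main term $\frac{6n}{\Delta^2}$ after a convexity argument. The error term $12\Delta^{\frac{3}{2}}$ I would obtain through a degree threshold of order $\sqrt{\Delta}$: the vertices whose neighborhoods deviate from the ideal balanced size, or whose degree lies within $O(\sqrt{\Delta})$ of $\Delta$, form a set contributing only an $O(1/\sqrt{\Delta})$ relative loss, which is exactly the gap between $\frac{6n}{\Delta^2}$ and $\frac{6n}{\Delta^2+12\Delta^{3/2}}$. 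Since each step is a polynomial-time local computation and the expectation is attained by some order $\pi$, this also supplies the claimed polynomial-time algorithm, for instance by derandomizing through the method of conditional expectations.

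The main obstacle is the tension between the two goals: pushing the constant up to $6$ forces the selected structure to contain genuine trees, namely pairs of matching edges joined by an edge, yet it is precisely these trees that threaten global acyclicity and that complicate the local $\pi$-minimality analysis. I expect the bulk of the work to lie in designing the local acceptance rule so that accepted structures provably cannot combine into a cycle while the resulting expected count is governed by neighborhood sizes of order $\Delta/2$ rather than $\Delta$, since only the latter scaling delivers the factor $6$ instead of a smaller constant.
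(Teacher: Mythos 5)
Your proposal is a plan rather than a proof, and the parts it leaves open are exactly the parts where the difficulty lies. The paper argues completely differently (minimum counterexample; the set $S$ of vertices of degree at most $\sqrt{\Delta}$ is shown to be independent and non-empty; an acyclic matching of $S$--$N$ edges is chosen to maximize $\sum_{v\in V_M\cap N}d_S(v)$, and an exchange argument plus a partition of the leftover vertices yields the contradiction), so a correct probabilistic argument would be genuinely new --- but as written yours has two concrete gaps. First, the acceptance rule that is supposed to deliver the constant $6$ is never defined. The rule you do specify (accept $uv$ when $u,v$ are the two $\pi$-smallest vertices of $N[u]\cup N[v]$) accepts each edge with probability $\tfrac{2}{s(s-1)}$ where $s=|N[u]\cup N[v]|\le 2\Delta$, so summing over $m\ge n/2$ edges gives only about $\tfrac{n}{4\Delta^2}$, a factor of $24$ short of the target; the claim that a refinement in which ``each side competes against only about half of its neighborhood'' closes this gap is asserted, not proved, and it is far from clear that any local rule certifies its acceptances against neighborhoods of size $\Delta/2$ rather than $2\Delta$. (Note that in the extremal corona-of-a-clique example the relevant neighborhood of a pendant edge genuinely has size about $\Delta$, not $\Delta/2$.) Second, you correctly identify that once you accept induced $P_4$'s, or more generally allow edges between distinct accepted components, acyclicity becomes a global condition: a cycle can pass through many accepted components, each pair of consecutive components joined by a single $G$-edge, and no single vertex need violate $\pi$-minimality. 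You flag this as ``the delicate point'' but offer no mechanism to rule it out; the paper avoids the issue entirely because its matching is a star-like corona (every matched $N$-vertex keeps a private degree-one partner from the independent set $S$), which is structurally forced to induce a forest.

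A smaller but real issue is the error term: you propose to absorb vertices of degree within $O(\sqrt{\Delta})$ of $\Delta$ into an $O(1/\sqrt{\Delta})$ relative loss, but the bound must hold for every graph with no isolated vertex, including graphs in which \emph{all} vertices have degree close to $\Delta$ or close to $1$; there is no a priori reason such vertices are few. In the paper the $12\Delta^{3/2}$ term arises from explicit counting of the sets $N_M$, $I_1\cup I_2$, and $I_4$ against $|M|$, not from discarding a small exceptional set. To turn your outline into a proof you would need to (a) exhibit the acceptance rule explicitly, (b) prove the global forest property of the accepted vertex set, and (c) carry out the expectation computation showing the main term is $\tfrac{6n}{\Delta^2}$; none of these steps is routine.
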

Note that, for graphs that are close to $\Delta$-regular, 
the bound $\nu_{ac}(G) \geq \frac{m}{\Delta^2}$ is stronger
than Theorem \ref{t1}.
We prove Theorem \ref{t1} in the next section.
In the conclusion we discuss algorithmic aspects of its proof 
and possible generalizations to so-called degenerate matchings \cite{bara}.

\section{Proof of Theorem \ref{t1}}

We prove the theorem by contradiction.
Therefore, suppose that $G$ is a counterexample of minimum order.
Clearly, $G$ is connected.
If $\Delta=1$, then $G$ is $K_2$, and, hence, $\nu_{ac}(G)=\frac{n}{2}$.
If $\Delta=2$, then $G$ is a path or a cycle, 
which implies $\nu_{ac}(G)\geq \frac{n-2}{2}$.
These observations imply $\Delta\geq 3$.
At several points within the proof we consider 
an acyclic matching $M$ in $G$, 
and we consistently use 
\begin{itemize}
\item $V_M$ to denote the set of vertices of $G$ that are incident to an edge in $M$,
\item $N_M$ to denote the set of vertices in $V(G)\setminus V_M$ that have a neighbor in $V_M$,
\item $G_M$ to denote the graph $G-(V_M\cup N_M)$, 
\item $I_M$ to denote the set of isolated vertices of $G_M$, and
\item $G'_M$ to denote the graph $G_M-I_M$.
\end{itemize}
Since $G'_M$ is no counterexample, 
and the union of $M$ with any acyclic matching in $G'_M$
is an acyclic matching in $G$, we obtain
$$\frac{6n}{\Delta^2+12\Delta^{\frac{3}{2}}}>\nu_{ac}(G)\geq |M|+\frac{6(n-|V_M\cup N_M\cup I_M|)}{\Delta^2+12\Delta^{\frac{3}{2}}},$$
which implies
\begin{align}\label{imporEQ}
|V_M|+|N_M|+|I_M|>\left( \frac{\Delta^2}{6} + 2\Delta^{\frac{3}{2}} \right)|M|.
\end{align}

\begin{claim} \label{claim1}
For every edge $uv$ in $G$, we have $d_G(u) + d_G(v)>2\sqrt{\Delta}$.
\end{claim}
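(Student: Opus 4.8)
The plan is to apply the master inequality (\ref{imporEQ}) to the simplest possible acyclic matching, namely the single edge $M=\{uv\}$, which is trivially acyclic since the induced subgraph is $K_2$, a forest. With $|M|=1$, the inequality reads $|V_M|+|N_M|+|I_M|>\frac{\Delta^2}{6}+2\Delta^{\frac{3}{2}}$, so it suffices to bound each of the three terms on the left from above in terms of $d_G(u)+d_G(v)$ and $\Delta$, and then verify that the resulting inequality forces $d_G(u)+d_G(v)>2\sqrt{\Delta}$.

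Two of the three terms are immediate. Since $V_M=\{u,v\}$, we have $|V_M|=2$, and since $N_M$ consists of the neighbors of $u$ and of $v$ other than $u,v$ themselves, we get $|N_M|\leq (d_G(u)-1)+(d_G(v)-1)=d_G(u)+d_G(v)-2$. The crux is to control $|I_M|$. Here I would exploit the defining property of $I_M$: a vertex $w\in I_M$ lies outside $V_M\cup N_M$ and is isolated in $G_M$, hence $w$ has no neighbor in $V_M$ (otherwise $w$ would belong to $N_M$) and no neighbor in $G_M$, so every neighbor of $w$ lies in $N_M$. Because $G$ has no isolated vertex, each $w\in I_M$ therefore has at least one neighbor in $N_M$.

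Counting the edges between $I_M$ and $N_M$ from both sides now yields the desired bound: from the $I_M$ side there are at least $|I_M|$ such edges, while from the $N_M$ side each of the $|N_M|$ vertices has degree at most $\Delta$ and so contributes at most $\Delta$ of them, giving $|I_M|\leq \Delta|N_M|\leq \Delta\bigl(d_G(u)+d_G(v)-2\bigr)$. Substituting the three bounds into (\ref{imporEQ}) produces $2+(\Delta+1)\bigl(d_G(u)+d_G(v)-2\bigr)>\frac{\Delta^2}{6}+2\Delta^{\frac{3}{2}}$, and after dividing by $\Delta+1$ this reduces the claim to the elementary inequality $\frac{\Delta^2}{6}+2\Delta\geq 2\sqrt{\Delta}$, which holds for all $\Delta\geq 1$; in fact the same computation delivers a lower bound on $d_G(u)+d_G(v)$ of order $\Delta$, considerably stronger than the stated $2\sqrt{\Delta}$.

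I expect the single genuinely nontrivial step to be the bound on $|I_M|$, since $I_M$ is the one term of (\ref{imporEQ}) not controlled directly by the local structure around the edge $uv$. Once the observation that every $I_M$-vertex sends all of its incident edges into $N_M$ is in place, the double count and the concluding arithmetic are routine.
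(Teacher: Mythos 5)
Your proposal is correct and follows essentially the same route as the paper: take $M=\{uv\}$, bound $|N_M|$ by $d_G(u)+d_G(v)-2$, bound $|I_M|$ by a factor of order $\Delta$ times $|N_M|$ using that every vertex of $I_M$ sends all its edges into $N_M$, and feed this into (\ref{imporEQ}); the paper merely phrases it as a contradiction with the assumption $d_G(u)+d_G(v)\leq 2\sqrt{\Delta}$ and uses the slightly sharper factor $\Delta-1$. Your observation that the computation actually yields a lower bound of order $\Delta$ is also correct, though the paper only needs the $2\sqrt{\Delta}$ threshold to make $S$ independent.
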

\begin{proof}
Suppose, for a contradiction, that $d_G(u) + d_G(v)\leq 2\sqrt{\Delta}$
for some edge $uv$ of $G$.
For $M=\{ uv\}$, we obtain
$|V_M|+|N_M|+|I_M|\leq 
2+\left(2\sqrt{\Delta}-2\right)+\left(2\sqrt{\Delta}-2\right)(\Delta-1)\leq 2 \Delta^{\frac{3}{2}}$,
contradicting (\ref{imporEQ}).
\end{proof}
Let $S$ be the set of vertices of degree at most $\sqrt{\Delta}$.
By Claim \ref{claim1}, the set $S$ is independent.

\begin{claim} \label{claim3}
$S$ is not empty.
\end{claim}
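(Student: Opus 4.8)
The plan is to argue by contradiction: assume $S=\emptyset$, so that every vertex of $G$ has degree strictly greater than $\sqrt{\Delta}$, and then show that this alone forces $\nu_{ac}(G)$ to be so large that $G$ cannot be a counterexample. The key observation driving the argument is that the summand $12\Delta^{\frac{3}{2}}$ in the denominator of the target bound is exactly what the cheap edge bound supplies once the minimum degree passes the threshold $\sqrt{\Delta}$.

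First I would bound the number $m$ of edges of $G$ from below. Since $d_G(v)>\sqrt{\Delta}$ for every vertex $v$, the handshake identity $2m=\sum_{v\in V(G)}d_G(v)$ gives $m>\frac{n\sqrt{\Delta}}{2}$. (Here the strictness of the degree inequality is what yields a strict bound on $m$, which I will want at the end.)

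Next I would feed this into the known estimate $\nu_{ac}(G)\geq \frac{m}{\Delta^2}$ of \cite{bara}, obtaining
\[
\nu_{ac}(G)\geq \frac{m}{\Delta^2}>\frac{n\sqrt{\Delta}}{2\Delta^2}=\frac{6n}{12\Delta^{\frac{3}{2}}}\geq \frac{6n}{\Delta^2+12\Delta^{\frac{3}{2}}},
\]
where the final inequality merely uses $\Delta^2\geq 0$. This contradicts the assumption that $G$ is a counterexample, that is, $\nu_{ac}(G)<\frac{6n}{\Delta^2+12\Delta^{\frac{3}{2}}}$ (equivalently, the special case of (\ref{imporEQ})), and the contradiction finishes the proof.

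I do not expect a genuine obstacle here; the whole step reduces to the comparison of two fractions. The one point to watch is the strictness: one must keep $m>\frac{n\sqrt{\Delta}}{2}$ rather than $\geq$, so that the resulting inequality for $\nu_{ac}(G)$ is strict and genuinely violates the counterexample hypothesis. As a sanity check, this is exactly the near-regular, high-minimum-degree regime in which, as remarked right after Theorem \ref{t1}, the bound $\nu_{ac}(G)\geq \frac{m}{\Delta^2}$ is the stronger estimate, which is precisely why invoking it suffices to rule out $S=\emptyset$.
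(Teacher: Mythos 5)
Your proof is correct, but it takes a genuinely different route from the paper. The paper keeps everything inside its minimal-counterexample machinery: assuming $\delta>\sqrt{\Delta}$, it picks an edge $uv$ with $u$ of minimum degree, sets $M=\{uv\}$, bounds $|V_M|+|N_M|+|I_M|\leq \frac{(\Delta+\delta)^2}{\delta}\leq \frac{(\Delta+\sqrt{\Delta})^2}{\sqrt{\Delta}}$ using the fact that every vertex of $I_M$ has degree at least $\delta$, and contradicts the reduction inequality (\ref{imporEQ}); this needs a separate check for $\Delta=3$ versus $\Delta\geq 4$. You instead bypass (\ref{imporEQ}) entirely: the handshake identity gives $m>\frac{n\sqrt{\Delta}}{2}$ when every degree exceeds $\sqrt{\Delta}$, and the bound $\nu_{ac}(G)\geq \frac{m}{\Delta^2}$ from \cite{bara} (which the paper itself quotes in the introduction) then yields $\nu_{ac}(G)>\frac{6n}{12\Delta^{3/2}}\geq \frac{6n}{\Delta^2+12\Delta^{3/2}}$, directly contradicting the counterexample hypothesis. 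Your computation checks out, avoids the case split on $\Delta$, and does not even need the minimality of $G$; the trade-off is reliance on an external result, whereas the paper's argument is self-contained and fits the uniform ``one fixed edge plus a smaller instance'' reduction scheme that the conclusion exploits for the algorithmic statement (though the bound from \cite{bara} is also constructive, so your version would still yield an efficient procedure, just a structurally different one). One small phrasing quibble: what you contradict is the counterexample hypothesis $\nu_{ac}(G)<\frac{6n}{\Delta^2+12\Delta^{3/2}}$ itself, not ``the special case of (\ref{imporEQ})'' --- inequality (\ref{imporEQ}) is a consequence of that hypothesis together with minimality, not equivalent to it; also, since the hypothesis is already a strict inequality, even a non-strict bound on $m$ would have sufficed.
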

\begin{proof}
Suppose, for a contradiction, 
that the minimum degree $\delta$ of $G$ is larger than $\sqrt{\Delta}$.
Let $uv$ be an edge of $G$ such that $u$ is of minimum degree.
Let $M=\{ uv\}$.
Since every vertex in $I_M$ has degree at least $\delta$, we have 
$$|V_M|+|N_M|+|I_M| 
\leq 2+(\Delta+\delta-2) + \frac{(\Delta+\delta-2)(\Delta-1)}{\delta}
\leq \frac{(\Delta+\delta)^2}{\delta}.$$
If $\Delta=3$, then $\delta$ is $2$ or $3$, and in both cases
$2+(\Delta+\delta-2) + \frac{(\Delta+\delta-2)(\Delta-1)}{\delta}$
is less than the right hand side of (\ref{imporEQ}),
contradicting (\ref{imporEQ}).
For $\Delta\geq 4$,
we obtain that 
$\frac{(\Delta+\delta)^2}{\delta}
\leq 
\frac{\left(\Delta+\sqrt{\Delta}\right)^2}{\sqrt{\Delta}}$
is less than the right hand side of (\ref{imporEQ}).
Hence, also in this case, we obtain a contradiction (\ref{imporEQ}).
\end{proof}
Let $N$ be the set of vertices that have a neighbor in $S$,
and, for a vertex $v$ in $G$, 
let $d_S(v)$ be the number of neighbors of $v$ in $S$.
Since $S$ is independent, the sets $S$ and $N$ are disjoint.
\begin{claim} \label{claim2}
$\max\{ d_S(v):v\in V(G)\}=\alpha \Delta$ for some $\alpha$ with $0.2\leq \alpha\leq 0.8$.

In other words, we have 
$d_S(v)\leq 0.8\Delta$ for every vertex $v$ of $G$, and
$d_S(v)\geq 0.2\Delta$ for some vertex $v$ of $G$.
\end{claim}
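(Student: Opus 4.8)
The plan is to prove the two halves of the statement separately, each by assuming the opposite and producing, via (\ref{imporEQ}), an acyclic matching $M$ that is too \emph{efficient}, in the sense that $|V_M|+|N_M|+|I_M|\le\left(\frac{\Delta^2}{6}+2\Delta^{\frac32}\right)|M|$, contradicting (\ref{imporEQ}). In every such estimate the decisive move is to split $I_M$ according to $S$: a vertex of $I_M\setminus S$ has degree larger than $\sqrt{\Delta}$ and all of its neighbours in $N_M$, so counting edges between $N_M$ and $I_M\setminus S$ gives $|I_M\setminus S|\le \sqrt{\Delta}\,|N_M|$, a term of order $\Delta^{\frac32}$ that is harmless. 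The whole difficulty is therefore concentrated in $I_M\cap S$, the low-degree vertices that the matching must dominate.

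For the lower bound I would assume $d_S(v)<0.2\Delta$ for every $v$ and start from a single edge $uv$ with $u\in S$, as provided by Claim \ref{claim3}. Here $|V_M|=2$ and $|N_M|\le d_G(v)+\sqrt{\Delta}-2\le \Delta+\sqrt{\Delta}-2$, while the number of edges between $N_M$ and $S$ is $\sum_{x\in N_M}d_S(x)<0.2\Delta\,|N_M|$, which bounds $|I_M\cap S|$. Together with the harmless $|I_M\setminus S|$, a single edge would already contradict (\ref{imporEQ}) once the coefficient of $\Delta^2$ dropped below $\frac16$; unfortunately the hypothesis $d_S<0.2\Delta$ only pushes this coefficient down to $\frac15$. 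To gain the missing factor I would enlarge $M$: every vertex of $I_M\cap S$ is an independent, low-degree vertex all of whose neighbours lie in $N_M$, so it can be matched into $N_M$, and each such added edge raises $|M|$ by one while deleting a vertex from $I_M$. Absorbing as many of these vertices as an acyclic extension permits should improve the ratio $\frac{|V_M|+|N_M|+|I_M|}{|M|}$ below the threshold.

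For the upper bound I would assume some vertex $v$ has $d_S(v)>0.8\Delta$ and match it to one of its neighbours $u\in S$. The remaining $d_S(v)-1$ neighbours of $v$ in $S$ then lie in $N_M$ and, having degree at most $\sqrt{\Delta}$, contribute only $O(\Delta^{\frac32})$ to $I_M$; the same is true of the at most $\sqrt{\Delta}-1$ neighbours of $u$. The genuinely expensive vertices are the at most $d_G(v)-d_S(v)<0.2\Delta$ neighbours of $v$ outside $S$, each of which can push up to $\Delta-1$ vertices into $I_M$, so $|I_M|\le (d_G(v)-d_S(v))(\Delta-1)+O(\Delta^{\frac32})$ and the coefficient of $\Delta^2$ is again just below $\frac15$. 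As in the lower bound, reaching the stated $\frac45$ requires shrinking the contribution of these few high-degree neighbours, for instance by also matching them while keeping the union acyclic.

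The main obstacle is exactly this last step in both directions: a single edge only delivers the weaker constants $\frac16$ and $\frac56$, and improving them to the stated $\frac15$ and $\frac45$ forces one to enlarge the matching by absorbing the problematic low-degree, respectively high-degree, vertices. The binding constraint there is acyclicity, since matching several vertices of $I_M\cap S$ simultaneously to vertices of $N_M$ can create cycles through the already matched endpoints, so not all of them can be absorbed. I expect the precise constants $0.2$ and $0.8$ to emerge from balancing how much can be absorbed acyclically against the degree bound $\sqrt{\Delta}$ on $S$; making that balancing act quantitative is where the real work lies.
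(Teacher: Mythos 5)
There is a genuine gap: in both halves of your argument the single-edge estimate stalls at a coefficient of $\tfrac{1}{5}$ in front of $\Delta^2$, which does \emph{not} contradict (\ref{imporEQ}) (whose right-hand side carries the coefficient $\tfrac{1}{6}$), and the remedy you propose --- enlarging $M$ by absorbing vertices of $I_M\cap S$ or by matching the high-degree neighbours of $v$ --- is left entirely unexecuted; you yourself flag that the acyclicity constraint makes it unclear how much can be absorbed. That enlargement is in fact unnecessary. The idea you are missing is to treat both ends of the claim at once by taking $v$ to be the \emph{maximizer} of $d_S$, writing $d_S(v)=\alpha\Delta$, and exploiting the product structure this creates. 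With $u$ a minimum-degree neighbour of $v$ (so $d_G(u)\le\sqrt{\Delta}$, since $v$ has a neighbour in $S$) and $M=\{uv\}$, split $I_M$ three ways: the vertices with a neighbour in $N_G(u)\cup(N_G(v)\cap S)$ number only $O\bigl(\Delta^{\frac{3}{2}}\bigr)$, because $u$ and every vertex of $N_G(v)\cap S$ have degree at most $\sqrt{\Delta}$; the remaining vertices of $I_M$ have all their neighbours in $N'=N_G(v)\setminus(N_G(u)\cup S)$, and $|N'|\le(1-\alpha)\Delta$ precisely because $\alpha\Delta$ of $v$'s neighbours lie in $S$; of these, the high-degree ones (degree $>\sqrt{\Delta}$) again number only $O\bigl(\Delta^{\frac{3}{2}}\bigr)$ by edge counting, while the low-degree ones are bounded by $\alpha\Delta\,|N'|\le\alpha(1-\alpha)\Delta^2$, using the maximality of $v$ to cap $d_S$ on $N'$ by $\alpha\Delta$.

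The whole claim then follows from the elementary fact that $\alpha(1-\alpha)\le 0.16<\tfrac{1}{6}$ whenever $\alpha\le 0.2$ or $\alpha\ge 0.8$; this is where the constants $0.2$ and $0.8$ come from, not from any balancing of acyclic absorption against the threshold $\sqrt{\Delta}$ as you speculate. Your lower-bound half loses exactly the factor $(1-\alpha)$ because you bound $|I_M\cap S|$ by $0.2\Delta\cdot|N_M|$ over \emph{all} of $N_M$ rather than over $N'$, and your upper-bound half loses the factor $\alpha$ (via the maximality of $v$) because you let each vertex of $N'$ contribute $\Delta-1$ rather than at most $\alpha\Delta$ low-degree vertices plus an $O\bigl(\Delta^{\frac{3}{2}}\bigr)$ remainder. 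Splitting the claim into two separate statements, each proved in isolation, is what prevents you from seeing this single unified estimate; as written, neither half of your argument closes.
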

\begin{proof}
Let the vertex $v$ maximize $d_S(v)$.
Suppose, for a contradiction, 
that $d_S(v) =\alpha\Delta$
for some $\alpha$ with either $\alpha<0.2$ or $\alpha>0.8$.
Let $u$ be a neighbor of $v$ of minimum degree. 
By Claim \ref{claim3}, we have $d_S(v)\geq 1$,
which implies $d_G(u)\leq \sqrt{\Delta}$.
Let $M=\{ uv\}$.
Clearly, 
$$|V_M|+|N_M|\leq \sqrt{\Delta}+\Delta.$$
Let 
$I_1$ be the set of vertices in $I_M$ that have a neighbor in $N_G(u)\cup (N_G(v)\cap S)$,
let $I_2=(I_M\setminus I_1)\cap S$,
and   
let $I_3=I_M\setminus (I_1\cup I_2)$.

We obtain 
\begin{eqnarray*}
|I_1| &\leq &(\Delta-1)(d_G(u)-1)+\left(\sqrt{\Delta}-1\right)|N_G(v)\cap S|\\
&\leq &(\Delta-1)\left(\sqrt{\Delta}-1\right)+\left(\sqrt{\Delta}-1\right)\alpha\Delta\\
&\leq &(1+\alpha)\Delta^{\frac{3}{2}}-\left(\sqrt{\Delta}+\Delta\right).
\end{eqnarray*}
Let $N'=N_G(v)\setminus (N_G(u)\cup S)$.
Note that $|N'|\leq (1-\alpha)\Delta$,
and that the vertices in $I_2\cup I_3$ have all their neighbors in $N'$.
By the choice of $v$,
every vertex in $N'$
has at most $\alpha\Delta$ neighbors in $S$,
which implies 
$$|I_2|\leq \alpha\Delta |N'|
\leq \alpha(1-\alpha)\Delta^2.$$
Since there are at most 
$\Delta|N'|$
edges between 
$N'$ and $I_3$,
and every vertex in $I_3$ has degree more than $\sqrt{\Delta}$,
we obtain 
$$|I_3|<\frac{\Delta|N'|}{\sqrt{\Delta}}\leq (1-\alpha)\Delta^{\frac{3}{2}}.$$
Altogether, we obtain
\begin{eqnarray*}
|V_M|+|N_M|+|I_M| & \leq &
\sqrt{\Delta}+\Delta+
(1+\alpha)\Delta^{\frac{3}{2}}-\left(\sqrt{\Delta}+\Delta\right)
+\alpha(1-\alpha)\Delta^2
+(1-\alpha)\Delta^{\frac{3}{2}}\\
&=& \alpha(1-\alpha)\Delta^2+2\Delta^{\frac{3}{2}}\\
&\leq & 0.16\Delta^2+2\Delta^{\frac{3}{2}},
\end{eqnarray*}
contradicting (\ref{imporEQ}).
\end{proof}
Note that, so far in the proof of each claim, we had $|M|=1$, 
and iteratively applying the corresponding reductions 
would eventually lead to an induced matching in $G$ similarly as in \cite{jo}.
In order to improve (\ref{felixBound}),
we now choose $M$ non-locally in some sense:
Let $M$ be an acyclic matching in $G$ such that 
\begin{enumerate}
 \item[(i)] $M$ only contains edges incident to a vertex in $S$, 
 \item[(ii)] every vertex in $V_M\cap S$ has degree one in the subgraph of $G$ induced by $V_M$,
 \item[(iii)] every vertex $v$ in $V_M\cap N$ satisfies $d_S(v) \geq 0.2 \Delta$, and
\end{enumerate}
$M$ maximizes 
\begin{align} \label{matchingMaximizer}
 \sum_{v \in V_M \cap N}{d_S(v)}.
\end{align}
among all acyclic matchings satisfying (i), (ii), and (iii).
By Claim \ref{claim2}, the matching $M$ is non-empty.

We now define certain relevant sets, see Figure \ref{mainFig} for an illustration.
\begin{itemize}
\item Let $X$ be the set of vertices in $N_M$ 
that are not adjacent to a vertex in $V_M\cap S$
and that have at least one neighbor in $S$ that is not adjacent to a vertex in $V_M$.

({\it Note that $X\subseteq N$, and that the edges between vertices in $X$ and suitable neighbors in $S$
are possible candidates for modifying $M$.})
\item Let $Y$ be the set of vertices in $N_M\setminus X$ 
that are not adjacent to a vertex in $V_M \cap S$.

({\it Note that $Y$ contains $N_M\setminus N=(N_M\cap S)\cup (N_M\setminus (S\cup N))$.})
\item Let $Z=(N\cap N_M)\setminus (X\cup Y)$.

({\it Note that $Z$ consists of the vertices in $N_M$ that have a neighbor in $V_M\cap S$.})
\item Let $I_1$ be the set of vertices in $I_M\cap S$ 
that have a neighbor in $N_M \setminus X$.

({\it Note that, by the definition of $X$, no vertex in $I_1$ can have a neighbor in $Y\cap N$,
which implies that every vertex in $I_1$ has a neighbor in $Z$.})
\item Let $I_2$ be the set of vertices in $I_M\setminus S$ 
that have a neighbor in $Z$.  
\item Let $I_3$ be the set of vertices in $I_M\cap S$ 
that only have neighbors in $X$.

({\it Note that $I_1\cup I_3=I_M\cap S$.})
\item Finally, let $I_4 = I_M\setminus (I_1\cup I_2 \cup I_3)$.
\end{itemize}
\begin{figure}[H]
 \centering
 \begin{tikzpicture}[scale=0.8]
 \draw[draw=black, thick] (0,0) rectangle ++(9,9);
 \draw[draw=black, dashed, thick] (3,0) -- (3,9);
 \draw[draw=black, dashed, thick] (6,0) -- (6,9);
 \draw[draw=black] (0,3) -- (9,3);
 \draw[draw=black] (0,6) -- (9,6);

 \foreach \i in {0,1,2} {
    \node (u\i) at (1.5,8.5-\i/3) {};
    \node (v\i) at (4.5,8.5-\i/3) {};
    \draw[-,thick] (u\i) -- (v\i);
 }
\draw[-,dotted, thick] (3.75,7.58) -- (3.75,7.08);
\draw[-,dotted, thick] (2.25,7.58) -- (2.25,7.08);

 \foreach \i in {0,1} {
    \node (x\i) at (1.5,6.833 -\i/3) {};
    \node (y\i) at (4.5,6.833 -\i/3) {};
    \draw[-,thick] (x\i) -- (y\i);
 }

 \draw[thick, shading=radial,outer color=gray!50 ,inner color=white] (1.5,2.1) circle(0.5);
 \draw[thick, shading=radial,outer color=gray!50 ,inner color=white] (1.5,0.9) circle(0.5);

\pgftext[x=1.5cm,y=2.1cm] {$I_1$};
\pgftext[x=1.5cm,y=0.9cm] {$I_3$};

 \draw[thick, shading=radial,outer color=gray!50 ,inner color=white] (4.5,5) circle(0.8);

 \pgftext[x=4.5cm,y=5cm] {$X$};

 \draw[thick, shading=radial,outer color=gray!50 ,inner color=white] (6,0.9) ellipse(2 and 0.35);
 \draw[thick, shading=radial,outer color=gray!50 ,inner color=white] (6,2.1) ellipse(2 and 0.35);

 \draw[thick, shading=radial,outer color=gray!50 ,inner color=white, rounded corners=10pt]
       (4.5,3.3) -- (0.5,3.3) -- (0.5,5.5) -- (2.5,5.5) -- (2.5,3.9) -- (6.5,3.9) -- (6.5,5.5) -- (8.5,5.5) -- (8.5,3.3) -- (4.5,3.3);

 \pgftext[x=6cm,y=2.1cm] {$I_2$};
 \pgftext[x=6cm,y=0.9cm] {$I_4$};

 \pgftext[x=4.5cm,y=3.6cm] {$Y$};
 \pgftext[x=0.75cm,y=8cm] {$M$};
% \pgftext[x=-0.75cm,y=7.5cm] {$V_M$};
 \pgftext[x=-0.75cm,y=4.5cm] {$N_M$};
% \pgftext[x=-0.75cm,y=1.5cm] {$G_M$};
 \pgftext[x=1.5cm,y=9.5cm] {$S$};
 \pgftext[x=4.5cm,y=9.5cm] {$N$};
% \pgftext[x=7.5cm,y=9.5cm] {$G-(S \cup N)$};

\end{tikzpicture}
 \caption{An illustration of the different relevant sets.} \label{mainFig}
\end{figure}
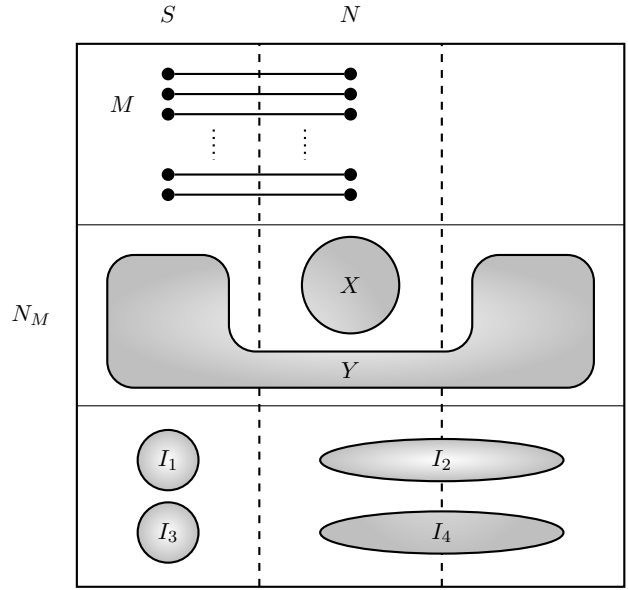 
Clearly,
\begin{eqnarray}\label{e1}
|V_M|+|N_M| & \leq & \left(\sqrt{\Delta}+\Delta\right)|M|.
\end{eqnarray}
Since every vertex in $I_1\cup I_2$ has a neighbor in $Z$,
and every vertex in $Z$ has a neighbor in $V_M\cap S$, we have
\begin{eqnarray}\label{e2}
|I_1\cup I_2| \leq (\Delta-1)|Z|\leq (\Delta-1)\left(\sqrt{\Delta} -1\right)|M|
=\left(\Delta^{\frac{3}{2}}-\Delta-\sqrt{\Delta}+1\right)|M|.
\end{eqnarray}
Since every vertex in $I_4$ has degree more than $\sqrt{\Delta}$
and has all its neighbors in $X \cup Y$,
and every vertex in $X \cup Y$ has a neighbor in $V_M\cap N$, we have
\begin{eqnarray}\label{e3}
|I_4| \leq \frac{(\Delta-1)|X\cup Y|}{\sqrt{\Delta}}\leq \frac{(\Delta-1)^2|M|}{\sqrt{\Delta}}
=\left(\Delta^{\frac{3}{2}}-2\sqrt{\Delta}+\frac{1}{\sqrt{\Delta}}\right)|M|.
\end{eqnarray}
Combining (\ref{e1}), (\ref{e2}), and (\ref{e3}), we obtain 
\begin{eqnarray}\label{e4}
|V_M|+|N_M|+|I_M|-|I_3| & \leq & 2\Delta^{\frac{3}{2}}.
\end{eqnarray}
In order to estimate $|I_3|$, we partition the set $X$ as follows:
\begin{itemize}
\item Let $X_1$ be the set of vertices $v$ in $X$ with $d_S(v) < 0.2 \Delta$,
\item let $X_2$ be the set of vertices in $X\setminus X_1$ with at least four neighbors in $V_M$, and 
\item let $X_3 = X\setminus (X_1 \cup X_2)$.
\end{itemize}
For a vertex $v$ in $V_M \cap N$, let $d_3(v)$ be the number of neighbors of $v$ in $X_3$.
\begin{claim} \label{claim4}
$|I_3| \leq 0.2 \Delta |X_1| + 0.8\Delta |X_2| + \frac{2}{3} \sum\limits_{v \in V_M \cap N}{d_S(v) d_3(v)}.$
\end{claim}
\begin{proof}
By Claim \ref{claim2}, we obtain that 
$$
|I_3| \leq \sum_{w \in X}{d_S(w)}
=\sum_{w \in X_1\cup X_2\cup X_3}{d_S(w)}
\leq 0.2 \Delta |X_1| + 0.8 \Delta |X_2| + \sum_{w \in X_3}{d_S(w)}.$$
Let $w$ be a vertex in $X_3$. 
By the definition of $X$, 
the vertex $w$ has a neighbor $u$ in $S$ that is not adjacent to a vertex in $V_M$.
If $w$ has only one neighbor in $V_M$,
then $M \cup \{wu\}$ is an acyclic matching satisfying (i), (ii), and (iii) that has a larger value in (\ref{matchingMaximizer}), 
contradicting the choice of $M$.
Hence, we may assume that $w$ has either $k=2$ or $k=3$ neighbors $v_1,\ldots,v_k$ in $V_M$. 
Let $u_1v_1,\ldots ,u_kv_k$ be edges in $M$, and suppose that $d_S(v_1) \leq \ldots \leq d_S(v_k)$.
Since 
$$M'=(M \cup \{wu\})\setminus \{u_1v_1,\ldots ,u_{k-1}v_{k-1}\}$$
is an acyclic matching satisfying (i), (ii), and (iii),
the choice of $M$ implies that the value of $M'$ in (\ref{matchingMaximizer}) is at most the one of $M$,
which implies 
$$d_S(w)\leq \sum\limits_{i=1}^{k-1}{d_S(v_i)}\leq \frac{k-1}{k}\sum\limits_{i=1}^k{d_S(v_i)}\leq \frac{2}{3}\sum\limits_{i=1}^k{d_S(v_i)}.$$
Now, we obtain 
$$
\sum_{w \in X_3}{d_S(w)} \leq \frac{2}{3} \sum_{w \in X_3}\,\,\,\sum_{v \in V_M \cap N \cap N_G(w)}\,\,\,d_S(v) 
= \frac{2}{3} \sum_{v \in V_M \cap N}{d_3(v)d_S(v)},
$$
which completes the proof.
\end{proof}
For a vertex $v$ in $V_M \cap N$, let $d_1(v)$ be the number of neighbors of $v$ in $X_1 \cup X_2$.
By property (iii), we have $d_S(v) \geq 0.2 \Delta$, which implies that $d_1(v) \leq 0.8 \Delta$. 
Using Claim \ref{claim4},
$xy\leq \frac{(x+y)^2}{4}$ for $x,y\geq 0$, and
$d_S(v)+d_1(v)+d_3(v)\leq \Delta$ and
$d_1(v)^2\leq 0.8 \Delta d_1(v)$ for $v\in V_M\cap N$,
we obtain 
\begin{align*}
|I_3| &\leq 0.2 \Delta |X_1| + 0.8\Delta |X_2| + \frac{2}{3} \sum_{v \in V_M \cap N}{d_S(v) d_3(v)} \\
      &\leq 0.2 \Delta (|X_1| + 4 |X_2|) + \frac{1}{6} \sum_{v \in V_M \cap N}{(d_S(v)+d_3(v))^2} \\
      &\leq 0.2 \Delta \sum_{v \in V_M \cap N}{d_1(v)} + \frac{1}{6} \sum_{v \in V_M \cap N}{(\Delta - d_1(v))^2} \\
      &=\frac{\Delta^2}{6}|M| + \Delta \left(\frac{1}{5}-\frac{1}{3}\right) \sum_{v \in V_M \cap N}{d_1(v)} + \frac{1}{6} \sum_{v \in V_M \cap N}{d_1(v)^2} \\
      &\leq \frac{\Delta^2}{6}|M| + \Delta \left( \frac{2}{15} -\frac{2}{15} \right) \sum_{v \in V_M \cap N}{d_1(v)} \\
      &=\frac{\Delta^2}{6}|M|,
\end{align*}
and together with (\ref{e4}), we obtain a final contradiction to (\ref{imporEQ}) completing the proof. $\Box$

\section{Conclusion}

While the choice of $M$ after Claim \ref{claim2} in the proof is non-constructive,
the proof of Theorem \ref{t1} easily yields an efficient algorithm that
returns an acyclic matching in a given input graph $G$ as considered in Theorem \ref{t1}
with size at least $\frac{6n}{\Delta^2+12\Delta^{\frac{3}{2}}}$.
If the statements of Claims \ref{claim1}, \ref{claim3}, or \ref{claim2} fail,
then their proofs contain simple reduction rules, 
each fixing one edge in the final acyclic matching and producing a strictly smaller instance $G'_M$.
Adding that fixed edge to the output on the instance $G'_M$ yields the desired acyclic matching.
The matching $M$ chosen after Claim \ref{claim2} can be initialized as any acyclic matching satisfying (i), (ii), and (iii).
If Claim \ref{claim4} fails, then its proof contains simple update procedures
that increase the value in (\ref{matchingMaximizer}).
Since this value is integral and polynomially bounded,
after polynomially many updates the statement of Claim \ref{claim4} holds,
and adding $M$ to the output on the instance $G'_M$ yields the desired acyclic matching.

The acyclic matchings $M$ produced by the proof of Theorem \ref{t1} actually have a special structure 
because the subgraph $H$ of $G$ induced by the set of vertices that are incident to an edge in $M$ 
is not just any forest but a so-called {\it corona} of a forest,
that is, every vertex $v$ of $H$ of degree at least $2$ in $H$ has a unique neighbor $u$ of degree $1$ in $H$,
and all the edges $uv$ form $M$.

As a generalization of acyclic matchings, 
\cite{bara} introduced the notion of a {\it $k$-degenerate matching}
as a matching $M$ in a graph $G$ such that 
the subgraph $H$ of $G$ defined as above is $k$-degenerate.
If the {\it $k$-degenerate matching number $\nu_k(G)$} of $G$ 
denotes the largest size of a $k$-degenerate matching in $G$,
then $\nu_1(G)$ coincides with the acyclic matching number.
We conjecture that 
$$\nu_k(G) \geq \frac{(k+1)n}{\left(\lfloor \frac{\Delta}{2} \rfloor + 1\right)\left(\lceil \frac{\Delta}{2} \rceil + 1\right)}$$
for every graph $G$ with $n$ vertices, sufficiently large maximum degree $\Delta$, 
and no isolated vertex.
A straightforward adaptation of the proof of Theorem \ref{t1} yields
$$
\frac{\nu_k(G)}{n}\geq 
\begin{cases}
(1-o(1))\dfrac{4(k+3)}{3\Delta^2} & \mbox{for }k\in \{ 2,3,4,5,6\}\mbox{ and}\\[2mm]
(1-o(1))\dfrac{k+4}{\Delta^2} & \mbox{for }k\geq 7.
\end{cases}
$$
for these graphs $G$.

\end{document}